\newtheorem{thm}{Theorem}[section]
\newtheorem{defn}[thm]{Definition}
\newtheorem{rk}[thm]{Remark}
\newtheorem{prop}[thm]{Proposition}
\newtheorem{lem}[thm]{Lemma}
\newtheorem{cor}[thm]{Corollary}
\newcommand{\tA}{\widetilde{A}}
\newcommand{\tE}{\widetilde{E}}
\newcommand{\F}{\mathbf{F}}
\newcommand{\f}{\varphi}
\newcommand{\tF}{\widetilde{\mathbf{F}}}
\newcommand{\tH}{\widetilde{H}}
\newcommand{\tM}{\widetilde{M}}
\newcommand{\N}{\mathbb{N}}
\newcommand{\R}{\mathbb{R}}
\renewcommand{\S}{\Sigma}
\renewcommand{\SS}{\mathbb{S}}
\newcommand{\p}{\pi}
\newcommand{\n}{\nu}
\newcommand{\W}{\Omega}
\newcommand{\e}{\varepsilon}
\renewcommand{\l}{\lambda}
\newcommand{\tn}{\tilde{\nu}}
\newcommand{\tr}{\widetilde{\rho}}
\newcommand{\bd}{\partial}
\renewcommand{\c}{\nabla}
\newcommand{\tc}{\widetilde{\nabla}}
\newcommand{\dtm}{\: d \widetilde{\mu}}
\newcommand{\sub}{\subset}
\newcommand{\wo}{\setminus}
\newcommand{\dsp}{\displaystyle}
\title{\vspace{-1 in}A Liouville Theorem for Mean Curvature Flow}
\author{Kevin Sonnanburg \\ email \href{mailto:sonnanburg@math.utk.edu}{sonnanburg@math.utk.edu}}
\affil{University of Tennessee}
\begin{document}

\maketitle 

\begin{abstract}
	Ancient solutions arise in the study of parabolic blow-ups. 
	If we can categorize ancient solutions, we can better understand blow-up limits. 
	Based on an argument of Giga and Kohn in~\cite{gk}, we give a Liouville-type theorem restricting ancient, type-I, non-collapsing two-dimensional mean curvature flows to either spheres or cylinders. 
\end{abstract}


\setcounter{section}{-1}

\section{Introduction}

 We study ancient solutions to mean curvature flow. 
Let $\F:\mathcal{M}\times \R^- \to \R^{N+1}$ be a family of smooth embeddings $\mathbf{F}(\cdot, t) = M(t)$, where $\mathcal{M}$ is a closed $N$-dimensional manifold. 
	We say that $M=\left\{ M(t) \right\}_{t \in [0,T)}$ is a mean curvature flow if
		\begin{equation} \label{eqn:mcf}
		\bd_t \F = - H \nu ,
	\end{equation}
where $H$ is the scalar mean curvature, $\nu$ is the \emph{outward} unit normal, and $-H \nu$ is the mean curvature vector. \\

We call a mean curvature flow \emph{ancient} if it is defined for all negative time. 
Ancient solutions arise as blow-ups of singularities (see the discussion after \thref{defn:r} for rescaling below for one way this can be done). 
Daskalopoulos, Hamilton, and \v{S}e\v{s}um completely classified ancient convex solutions for embedded curves in $\R^2$ in~\cite{dhs}. 
Here our goal is to further the classification to two dimensions for mean-convex, type-I, non-collapsed flows. 
At any point in time, an ancient solution has had an arbitrarily long amount of time for diffusion to take place, so we expect it to be highly regular and symmetric. 
We see this in the work of Huisken and Sinestrari in~\cite{hs} where they show, assuming convexity and compactness, a number of conditions equivalent to the flow being a shrinking spere. 
This is similar to our result here, so we emphasize that although we impose other restrictions, we allow for compactness or noncompactness. 
(Haslhofer and Kleiner show in~\cite{hk} that ancient mean-convex, non-collapsing solutions are convex anyway.)
\\

In the theorem, we do assume some regularity to begin with. 
In the spotlight are the type-I curvature bound and the non-collapsing condition. 
With the type-I assumption, we show that an eternal solution for the rescaled flow, as in \thref{defn:r} (see~\cite{hui}), all orders of curvature are bounded in time. 
The non-collapsing condition prevents sheeting, thereby preserving embeddings as $t \to -\infty$. 
This is important for integral convergence if one intends to integrate on the embedded hypersurface itself, rather than a background manifold. 
Both assumptions are rather strong, but since we have in mind ancient solutions which arise from blow-ups at singularities of type-I, mean-convex, compact flows, both are quite reasonable. 
\\

There are examples of ancient solutions that do not satisfy the conlusions of our main theorem. 
The paperclip solution, one of the two classes in~\cite{dhs}, converges to two parallel lines as $t \to -\infty$, but behaves like the grim reaper solution at either end. 
This was generalized in a sense by White in~\cite{whimc} to higher dimensions, but was studied in more detail by Haslhofer and Hershkovitz in~\cite{hh}. 
The paperclip, however, is neither type-I, nor non-collapsing, as $t \to -\infty$. 
\\

The method here is inspired by that of Giga and Kohn in~\cite{gk}. 
There they show that the rescaled limits as $t \to -\infty$ and $t \to +\infty$ are the same. 
They then classify self-similar solutions to find that the forward and backward limits of the rescaled solution must have the same energy. 
The energy they use is decreasing, so once they relate it to the time derivative of the solution, they can integrate across time to show the the solution is constant in time. 
\\

We can build off the work of Huisken in~\cite{hui} or White in~\cite{whimc} to classify the forward limit, and the work of Haslhofer and Kleiner in~\cite{hk} to classify the backward limit. 
However, the geometric nature of the flow adds a complication: there are different self-similar solutions that can arise as blow-ups and blow-downs, and they have different energies. 
We calculate the energy (Huisken's Gaussian area functional defined in~\cite{hui}) explicitly in each case. 
The fact that energy is decreasing means that the backward limit cannot have a lower engergy than the forward limit, but this does not cover the case when the backward limit has a strictly higher energy than that of the forward limit. 
We see in the proof of \thref{prop:limit} that the only case in which the monotonicity does not help is a noncompact backward limit with a compact forward limit. 
This case is ruled out rather directly in \thref{lem:evo}, since the rescaled evolution equation tends to expand the hypersurface. 
\\

We now give some definitions so we can state the main theorem (\thref{thm:main}). 


\begin{defn}[Singular Point]
	\thlabel{defn:sing}
	We say $x \in \R^{N+1}$ is a singular point if there is a sequence $(p_i,t_i) \in \mathcal{M} \times \R^-$ with $t_i \nearrow 0$ such that $\F(p_i,t_i) \to x$ and $|A(p_i,t_i)| \to \infty$ as $i \to \infty$. 

		All mentions of singularities are at the first singular time $t=0$. 
	\end{defn}

\begin{defn}[Type-I Flow]
	Let $M$ be a mean curvature flow for times $t \in \R^-$. 
	Let $\l(t) = (-2t)^{-\frac{1}{2}}$, and write $A$ for the second fundamental form. 
	We say $M$ is type-I if there is a $C_0>0$ so that 
	\[
		\max_{x \in M(t)} |A(x,t)| \le C_0 \l(t) \mbox{ for } t \in \R^- .
	\]
\end{defn}

\begin{rk}
	The type-I condition is typically employed in discussions of blow-ups at singularities. 
	However we apply the condition to the entirety of an ancient flow, meaning curvature decays as $t \searrow -\infty$ as well. 
\end{rk}

\begin{defn}[Polynomial Volume Growth]
	We say a surface $\Sigma \in \R^3$ has \emph{polynomial volume growth} if $Vol(B_R(0) \cap \Sigma)$ is bounded by some polynomial $P(R)$. 
	(Volume here refers to the intrinsic volume, in this case area.)
	
	We say a mean curvature flow $M$ has \emph{uniform polynomial volume growth} if, for every $t$ that $M$ is defined, $M(t)$ has polynomial volume growth, where the polynomial $P(R)$ is independent of $t$. 
\end{defn}

\begin{defn}[Non-Collapsing Condition]
	From Definition 1 of~\cite{and}: We say a mean-convex hypersurface $M_0$ bounding an open region $\W$ in $\R^{N+1}$ is $\alpha$-non-collapsed if, for every $x \in M_0$, there exists a sphere of radius $\frac{\alpha}{H(x)}$ contained in $Cl(\W)$, and another contained in $\W^c$, tangent to $M_0$ at $x$. (See Figure~\ref{fig:andrews}). 
\\

\begin{figure}[h]
	\centering
\includegraphics[scale=.2]{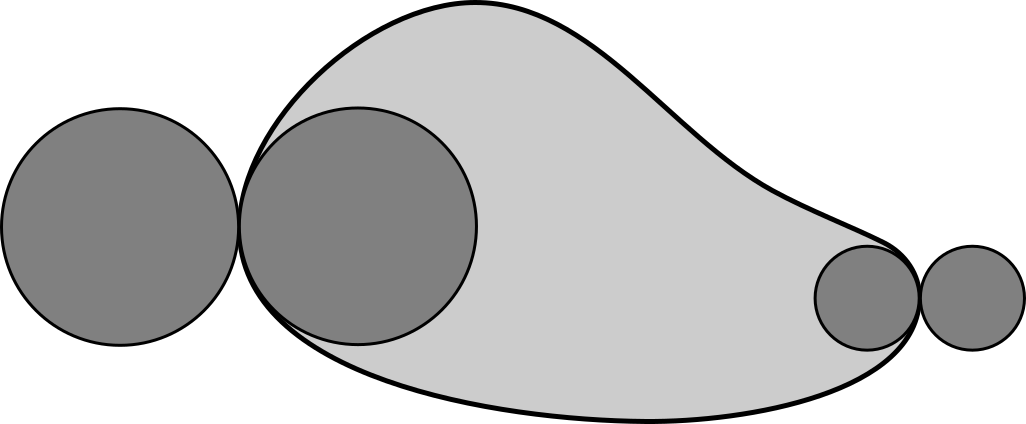}
\caption{Spheres of radii varying with curvature.}
\label{fig:andrews}
\end{figure}
\end{defn}

\begin{thm}[Main Theorem]
	\thlabel{thm:main}
	Let $M(t)$ be a smooth, properly embedded, complete, ancient, type-I, mean-convex, $\alpha$-non-collapsed, two-dimensional mean curvature flow in $\R^3$ with first singular point $x$ at time $t=0$.
	Further assume that $M(t)$ has uniform polynomial volume growth on $\R^-$. 

	Then $M(t)$ is either a sphere or cylinder, shrinking homothetically until it vanishes at time $t=0$. 
\end{thm}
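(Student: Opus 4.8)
The plan is to follow the Giga--Kohn strategy of comparing the rescaled flow's forward and backward limits, but adapted to the geometric setting via Huisken's monotonicity formula. First I would introduce the rescaled flow (as in the referenced \thref{defn:r}), conjugating \eqref{eqn:mcf} by the parabolic dilation $\tF(\cdot,s) = \l(t)\F(\cdot,t)$ with $s = -\log(-t)$, so that the singularity at $t=0$ becomes the limit $s \to +\infty$ and the ancient past $t \to -\infty$ becomes $s \to -\infty$. Under this change of variables the type-I bound becomes a uniform-in-$s$ bound on $|\tA|$, and I would upgrade this to bounds on all higher derivatives of curvature (via standard interior estimates for the rescaled parabolic equation, using that the flow is eternal in the $s$ variable). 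These uniform bounds guarantee that along any sequence $s_i \to \pm\infty$ the rescaled surfaces subconverge smoothly to a limiting flow, and the non-collapsing condition ensures these limits remain properly embedded rather than sheeting onto themselves.

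Next I would show that both the forward limit ($s \to +\infty$) and the backward limit ($s \to -\infty$) are self-shrinkers, i.e.\ static solutions of the rescaled flow. This is the heart of the Giga--Kohn mechanism: Huisken's Gaussian area functional is monotone nonincreasing along the rescaled flow, with the dissipation controlled by the squared $L^2$-norm of the rescaled speed $H - \langle \tF, \nu\rangle$; since the functional is bounded (by the type-I and polynomial-volume-growth hypotheses, which make the Gaussian integral finite and uniformly bounded), its total variation over all $s \in \R$ is finite, forcing the dissipation integral to vanish in the limits. Hence the limits are self-similarly shrinking, and I would invoke the classification of two-dimensional self-shrinkers available under these curvature and embeddedness hypotheses (building, as the introduction indicates, on Huisken~\cite{hui} and White~\cite{whimc} for the forward limit and Haslhofer--Kleiner~\cite{hk} for the backward limit) to conclude each limit is a sphere, a cylinder, or a plane.

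Then I would compare energies. The monotonicity of the Gaussian area gives that the backward limit has energy at least that of the forward limit. Computing Huisken's functional explicitly on each model self-shrinker (sphere, cylinder, plane, with their known Gaussian densities) orders the possibilities, and in most configurations the energy inequality immediately forces the forward and backward limits to coincide, which by the vanishing of the total dissipation forces the original rescaled flow to be static in $s$ --- i.e.\ the original flow is exactly a homothetically shrinking sphere or cylinder. As the introduction flags, the one configuration not settled by monotonicity alone is a noncompact backward limit paired with a compact forward limit (a cylinder shrinking down to a point-like sphere), and I would rule this out directly using \thref{lem:evo}: the rescaled evolution equation has a tendency to expand the surface, which is incompatible with transitioning from the larger noncompact configuration backward to a smaller compact one.

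The main obstacle I anticipate is the energy-comparison step, specifically the exceptional noncompact-to-compact case. The monotonicity formula is a one-directional tool and genuinely fails to constrain this scenario, so the whole argument hinges on the independent geometric input of \thref{lem:evo}; making that lemma do precisely the work of excluding a compact forward limit beneath a noncompact backward limit, and verifying that no degenerate intermediate behavior slips through, is where the real content lies. A secondary technical difficulty is ensuring the Gaussian area functional is finite and that all the limiting and integration-by-parts manipulations are justified for possibly noncompact $M(t)$ --- this is exactly why the uniform polynomial volume growth hypothesis is imposed, and I would use it to control the Gaussian tails uniformly in $s$ so that smooth subconvergence upgrades to convergence of the energy.
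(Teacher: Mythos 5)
Your proposal follows essentially the same route as the paper: rescale, use the type-I bound plus interior parabolic estimates to get uniform higher-order curvature bounds and smooth subsequential limits at $s \to \pm\infty$, classify those limits as plane/cylinder/sphere via \cite{whimc} and \cite{hk}, compare Huisken's Gaussian areas using monotonicity, dispose of the one configuration monotonicity misses (noncompact backward limit over compact forward limit) via \thref{lem:evo}, and conclude from vanishing total dissipation that the rescaled flow is static. The structure, key lemmas, and even the identification of the exceptional case coincide with the paper's argument, so the proposal is correct in approach.
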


\begin{rk}
	The assumption that $N=2$ is necessary to restrict the topologies of blow-ups at a singular point. 
	See \thref{prop:limit} and the discussion before it for further explanation. 
\end{rk}

\begin{rk}
	Of course, for manifolds without boundary, properly embedded implies completeness. 
	Furthermore, we have by Corollary 1.6 of~\cite{sonc}, that $\alpha$-non-collapsed implies properly embedded and uniform polynomial volume growth. 
\end{rk}

We also need the following definitions for the proof. 

\begin{defn}(Gaussian Area)
	\thlabel{defn:r}
	For a flow $M(t)$ of surfaces in $\R^3$, define 
	\[
		E_{(x_0,t_0)}(t) 
		= \int_{M(t)} \rho_{(x_0,t_0)}(x,t) \: d \mu ,
	\]
	where 
	\[
		\rho_{(x_0,t_0)}(x,t) = \frac{1}{\left( 4\pi(t_0- t) \right)^{\frac{N}{2}}} 
		e^{\frac{-|x-x_0|^2}{4(t_0-t)}} .
	\]
\end{defn}
	We will mostly be assuming $(x_0,t_0)=(0,0)$. 
	In that case, we \emph{omit} the subscript. 
	That is, $E:=E_{(0,0)}$ and $\rho:=\rho_{(0,0)}$. 

\begin{defn}[Rescaled Flow]
	Let $\F$ be a parameterization of a mean curvature flow. 
	Let $\lambda(t) = (2(-t))^{-\frac{1}{2}}$, $\xi = \l(t) x$, and $s = -\frac{1}{2} \log(-t)$. 
	Without loss of generality, we assume throughout the paper that the singular point is 0. 
	So define the rescaled flow
	\[
		\tF(p,s) := \lambda(t) \F(t) .
	\]
	Further define 
	\[
		\tE(s) = 
		\int_{\tM(s)} \tr(x) \dtm(s) ,
	\]
	where 
		\[
			\tr(x) = e^{\frac{-|x|^2}{2}} .
		\]
\end{defn}
	As introduced in~\cite{hui}, the new flow satisfies the equation 
	\begin{equation}
		\label{eqn:r}
		\partial_s \tF = \tF-\tH \nu ,
	\end{equation}
	where $\tH = \tH\left( \tF(p,s) \right)$ 
		and $\tn = \tn\left( \tF(p,s) \right)$. 
	We will assume for simplicity that the singular point in question is the origin, so we only need to rescale around 0. 
		Notice that $e^s = \sqrt{2} \l(t)$.
		\\

	With this definition in mind, we can see one way to arrive at an ancient solution in the study of blow-ups. 
	For a non-ancient solution of mean curvature flow defined for times starting at $t=0$ and first singularity at $(x,t)=(0,T)$, the solution rescaled around the singularity is defined for $s \in [s_0,\infty)$, where $s_0 = -\frac{1}{2} \log T$. 
	If we define $\tM_n(s)=\tM(s+s_n)$ with $s_n \nearrow \infty$, a limit solution is obtained given enough curvature control (as one would have if the flow is type-I). 
		
		\begin{rk}
			If $M$ is type-I, then $\tH$ uniformly bounded for all time, even if $M$ is ancient. 
		\end{rk}


\begin{defn}[Local Graph Convergence]
	\thlabel{defn:to}
	Assume $k \ge 1$. 
	Let $\S$ and $\S_n$ be $k$-smooth, properly embedded hypersurfaces in $\R^{N+1}$. 
	Assume $\S$ is oriented by a smooth normal vector field $\n$. 
	We say $\S_n$ converges to $\S$ locally in the graph sense to order $k$  if the following holds:

	For every open ball $B \sub \R^{N+1}$, there is $n_0 >0$ so that whenever $n \ge n_0$
	\begin{enumerate}[i)]
		\item The limit set $\S$ is the set of all accumulation points of $\S_n$. 
			That is $\S$ is the set of all $x \in \R^{N+1}$ such that there is a sequence of points $x_n \in M_n$ with $x_n \to x$. 
		\item If $\S \cap B$ is nonempty, the nearest point map 
			\[
				\p_n^B: \S_n \cap B \to \S 
			\]
			is a well-defined diffeomorphism onto its image $V_n^B \sub \S$. 
		\item For $y \in M_n \cap B$, write $x=\pi_n^B(y)$. 
			Then define $g_n^B: V_n^B \to \R$  to be the height function
			\[
				g_n^B(x)=(y-x) \cdot \n(x) 
			\]
			over $V_n^B \sub \S$ so that
			\[
				(\p_n^B)^{-1}(x) = x + g_n^B(x) \n(x) 
			\]
			so that $g_n^B$ is the signed height of $V_n^B \sub \S_n$ over $\S \cap B$. 
			Then for every $k \in \N$, 
			\[
				\|g_n^B\|_{C^k(V_n^B)} \xrightarrow[n \to \infty]{} 0. 
			\]
	\end{enumerate}

\end{defn}

\section{Some Technical Lemmas}
  
\subsection{mean curvature flow background}

\begin{lem}[Proposition 2.3 of \cite{hui}]
	\thlabel{lem:huibound}
	Given $s_0 \in \R$ (and corresponding $t_0$), for each $m > 0$, there is $C(m) < \infty$, such that $|\tc^m \tA|^2 < C(m)$ holds on $\tM(s)$ uniformly in $s$, where $C(m)$ depends on $N$, $m$, $C_0$, and $M(t_0)$. 
\end{lem}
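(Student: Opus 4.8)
The plan is to argue by induction on $m$, exploiting the reaction--diffusion structure of the evolution equations for the covariant derivatives of the rescaled second fundamental form, with the base case supplied by the type-I hypothesis. Throughout, constants denoted $C$ may change from line to line and depend only on the quantities listed in the statement. For the base case $m=0$, note that under the rescaling $\tF = \l(t) \F$ the induced metric scales by $\l^2$ while the second fundamental form, viewed as a bilinear form, scales by $\l$, so that $|\tA| = \l^{-1}|A|$. The type-I bound $|A(\cdot,t)| \le C_0 \l(t)$ thus becomes $|\tA| \le C_0$ uniformly in $s$, which is exactly the claim for $m=0$ (and, since $\tH$ is a trace of $\tA$, recovers the uniform bound on $\tH$).

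For the inductive step, assume $|\tc^j \tA|^2 \le C(j)$ for all $j < m$. Commuting derivatives through Huisken's evolution equation for $\tA$ gives, schematically,
\[
\partial_s \tc^m\tA = \tD\, \tc^m\tA - \langle \tF^\top, \tc(\tc^m\tA)\rangle + \sum_{i+j+k=m} \tc^i\tA * \tc^j\tA * \tc^k\tA + (\text{lower-order terms from rescaling}),
\]
where $\tF^\top$ is the tangential part of the position vector and $*$ denotes metric contraction. Pairing with $\tc^m\tA$ yields $\partial_s |\tc^m\tA|^2 \le \tD |\tc^m\tA|^2 - \langle \tF^\top, \tc|\tc^m\tA|^2\rangle - 2|\tc^{m+1}\tA|^2 + C|\tc^m\tA|^2 + C$: in each reaction term at most one factor can carry $m$ derivatives while the remaining factors have order $<m$ and are bounded by the inductive hypothesis together with the $m=0$ bound on $\tA$, and Cauchy--Schwarz with Young's inequality absorbs the mixed terms. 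The transport term is first order and vanishes at an interior spatial maximum, so it does not obstruct the maximum principle.

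The positive reaction term $C|\tc^m\tA|^2$ is the obstacle, since on its own it only produces exponential-in-$s$ growth, which is useless for an eternal flow. To defeat it I would run a time-localized Bernstein argument: fixing a target time $s^*$ and working on $[s^*-1,s^*]$, set $G = \theta(s)\,|\tc^m\tA|^2 + \Lambda\,|\tc^{m-1}\tA|^2$ with $\theta(s) = s-(s^*-1)$ and $\Lambda$ a large constant. The favorable term $-2\Lambda|\tc^m\tA|^2$ arising from $\Lambda\,\partial_s|\tc^{m-1}\tA|^2$ dominates the unfavorable $\theta\,C|\tc^m\tA|^2$ plus the $\theta'|\tc^m\tA|^2$ contribution once $\Lambda$ is large, leaving $\partial_s G \le \tD G - \langle \tF^\top, \tc G\rangle + C'$. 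Because $\theta$ vanishes at the initial time, the only a priori information entering is the uniform bound $|\tA| \le C_0$, so the conclusion $G(s^*) \le \Lambda C_0^2 + C'$ is independent of $s^*$; evaluating at $\theta(s^*)=1$ gives the desired uniform bound on $|\tc^m\tA|^2$, which settles the backward-in-time direction as well.

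The one genuine subtlety, and the step I expect to be hardest, is justifying the maximum principle when $M(t)$ is noncompact. In the compact case it is immediate. In general I would localize in space as well, multiplying $G$ by a cutoff of the rescaled position and controlling the resulting gradient and zeroth-order error terms using the uniform curvature bound and the uniform polynomial volume growth hypothesis, equivalently working against the Gaussian weight $\tr$ via a weighted maximum principle in the spirit of Ecker--Huisken. Once the maximum principle is available on each time slab $[s^*-1,s^*]$, the induction closes and produces $C(m)$ depending only on $N$, $m$, $C_0$, and $M(t_0)$.
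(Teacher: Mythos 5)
The paper never proves this lemma: it is imported verbatim from Proposition 2.3 of \cite{hui}, with the only modification being that $M(t_0)$ is re-labelled as ``initial data'' so the statement makes sense for an ancient flow. So there is no internal proof to compare against; the relevant comparison is with Huisken's cited argument. Your proposal is essentially a reconstruction of that classical proof: the base case $|\tA| = \l^{-1}|A| \le C_0$ from the type-I hypothesis, the schematic evolution inequality for $|\tc^m \tA|^2$ with cubic reaction terms, and absorption of the bad $C|\tc^m\tA|^2$ term by the dissipative $-2\Lambda|\tc^m\tA|^2$ produced by the lower-order summand $\Lambda|\tc^{m-1}\tA|^2$. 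Your time cutoff $\theta(s)$ is a refinement the cited result does not need: since $C(m)$ is allowed to depend on $M(t_0)$, one can work forward from $s_0$ with the un-localized quantity $f = |\tc^m\tA|^2 + \Lambda|\tc^{m-1}\tA|^2$, observe that $-2\Lambda|\tc^m\tA|^2 \le -cf + C$ by the inductive bound, and get $\partial_s f \le \tD f - cf + C$, whose maximum principle conclusion is uniform in $s \ge s_0$. What your localization buys is independence from initial data, hence bounds backward in time as well --- something the paper obtains by an entirely separate mechanism (\thref{lem:ab} and \thref{cor:bound}, via Ecker's local estimate \thref{lem:eck}). So if your argument is completed, it subsumes both this lemma and the paper's backward regularity section.

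The one genuine gap is the step you yourself flag: the maximum principle on a noncompact slice. Huisken's Proposition 2.3 is proved for compact flows, where the transport term $\langle \tF^{\top}, \tc G\rangle$ indeed vanishes at an interior spatial maximum and nothing more is needed; but this paper applies the lemma to possibly noncompact $\tM$ (cylinders are among the expected limits), and there your argument currently rests on the phrase ``in the spirit of Ecker--Huisken.'' That is a placeholder, not a proof: the drift $\tF^{\top}$ grows linearly in $|\tF|$, so one must either cut off in space and control the error terms, or run a Gaussian-weighted maximum principle, and both require an integrability or volume-growth hypothesis. Uniform polynomial volume growth is \emph{not} part of the statement of this lemma --- it is a hypothesis of the main theorem (available in context, e.g.\ from $\alpha$-non-collapsing via Corollary 1.6 of \cite{sonc}) --- so you would need to import it explicitly and then verify that the resulting constant still depends only on $N$, $m$, $C_0$, and $M(t_0)$. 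Filling this in is routine but mandatory; as written, the noncompact case is asserted rather than proved.
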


This phrasing is changed slightly to accomodate ancient solutions by choosing $M(t_0)$ as ``initial data''.  

\begin{lem}[Corollary 3.2 of \cite{hui}]
	\thlabel{lem:dE}
	For the rescaled flow $\tM$, 
	\[
		\bd_s \tE(s) = \int_{\tM(s)} \left|\tF^{\perp}-\tH \tn\right|^2 \tr \dtm . 
	\]
\end{lem}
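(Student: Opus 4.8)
The plan is to establish this as a direct first-variation computation: the identity is Huisken's monotonicity formula transcribed into the rescaled frame, and the right-hand side is nothing but the weighted $L^2$-norm of the normal speed of the rescaled flow. To see this, write the full rescaled velocity as $\mathbf{W} := \bd_s \tF = \tF - \tH \tn$. Since $\tF^{\perp} = \langle \tF, \tn\rangle \tn$, we have $\tF^{\perp} - \tH\tn = (\langle \tF,\tn\rangle - \tH)\tn$, so that $|\tF^{\perp} - \tH\tn|^2 = (\langle \tF,\tn\rangle - \tH)^2 = (\mathbf{W}\cdot\tn)^2$ is exactly the square of the normal component of the velocity. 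Thus the task reduces to differentiating $\tE(s) = \int_{\tM(s)} \tr \dtm$ and reorganizing the result into this perfect square.

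First I would record the two ingredients of the first variation. The area measure evolves by $\bd_s \dtm = (\operatorname{div}_{\tM}\mathbf{W}) \dtm$; using $\operatorname{div}_{\tM}\tF = N$ and $\operatorname{div}_{\tM}(\tH\tn) = \tH \operatorname{div}_{\tM}\tn = \tH^2$ (the tangential gradient of $\tH$ is orthogonal to $\tn$) gives $\operatorname{div}_{\tM}\mathbf{W} = N - \tH^2$. For the weight, $\tr = e^{-|\tF|^2/2}$ carries no explicit $s$-dependence, so its derivative along the flow is $\c\tr\cdot\mathbf{W} = -(\tF\cdot\mathbf{W})\,\tr$. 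Assembling these two contributions expresses $\bd_s\tE$ as the integral against $\tr\dtm$ of a polynomial in $\tF$, $\tn$, and $\tH$ that is not yet a perfect square.

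The crux is to collapse that raw integrand into $|\tF^{\perp}-\tH\tn|^2$ by a weighted integration by parts. The correct test field is the tangential position $\tF^{\top} = \tF - \langle\tF,\tn\rangle\tn$, for which $\operatorname{div}_{\tM}\tF^{\top} = N - \tH\langle\tF,\tn\rangle$ and $\c_{\tM}\tr = -\tr\,\tF^{\top}$, so that $\operatorname{div}_{\tM}(\tr\,\tF^{\top}) = \tr\,[\,N - \tH\langle\tF,\tn\rangle - |\tF^{\top}|^2\,]$ with $|\tF^{\top}|^2 = |\tF|^2 - \langle\tF,\tn\rangle^2$. Because $\tM(s)$ is properly embedded with uniform polynomial volume growth and $\tr$ decays like a Gaussian, the divergence theorem applies with no flux at infinity, so $\int_{\tM}\operatorname{div}_{\tM}(\tr\,\tF^{\top})\dtm = 0$. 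Feeding this vanishing identity into the expression from the previous step cancels the dimension term and the quadratic terms in the position vector, leaving precisely the square $(\langle\tF,\tn\rangle-\tH)^2 = |\tF^{\perp}-\tH\tn|^2$ and giving
\[
  \bd_s \tE(s) = \int_{\tM(s)} \left|\tF^{\perp}-\tH\tn\right|^2 \tr \dtm ,
\]
which is the stated identity.

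I expect the main obstacle to be analytic rather than algebraic: justifying differentiation under the integral sign and the vanishing of the boundary flux when $\tM(s)$ is noncompact. Both hinge on the Gaussian weight dominating the polynomial area growth together with uniform control of $\tH$, $\langle\tF,\tn\rangle$, and their derivatives, which is supplied by the higher-order curvature bounds of \thref{lem:huibound}. Concretely I would run the divergence theorem on the exhaustion $\tM(s)\cap B_R$, estimate the flux across $\bd B_R$ by playing the polynomial volume bound against the Gaussian decay, and send $R\to\infty$; the properness and uniform polynomial volume growth hypotheses of the main theorem are exactly what make this cutoff argument and the exchange of limit and integral legitimate.
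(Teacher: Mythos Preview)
The paper does not actually prove this lemma: it is stated as a quotation of Corollary~3.2 of Huisken~\cite{hui}, with no argument given. Your proposal supplies exactly the standard derivation behind that corollary---first variation of the weighted area under the rescaled velocity $\mathbf{W}=\tF-\tH\tn$, followed by the weighted divergence identity for the tangential position field $\tr\,\tF^{\top}$ to collapse the integrand into a perfect square---so in substance you are reproducing Huisken's own proof rather than diverging from the paper.

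One genuine issue worth flagging: if you carry your outlined algebra to the end, the perfect square emerges with a \emph{minus} sign,
\[
\bd_s \tE(s) \;=\; -\int_{\tM(s)} \bigl|\tF^{\perp}-\tH\tn\bigr|^2 \tr \dtm ,
\]
not the sign written in the statement. Concretely, the raw first-variation integrand is $\tr\,[\,N-|\tF|^2+\tH\langle\tF,\tn\rangle-\tH^2\,]$, and substituting your divergence identity $\int\tr\,[\,N-|\tF|^2\,]\dtm=\int\tr\,[\,\tH\langle\tF,\tn\rangle-\langle\tF,\tn\rangle^2\,]\dtm$ yields $-\,(\langle\tF,\tn\rangle-\tH)^2$. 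This is consistent with the paper's later use of the lemma (``$\tE$ is decreasing in time'') and with Huisken's original statement; the missing sign is a typo in the displayed formula that your write-up inherits. Otherwise your handling of the noncompact case---exhaustion by $B_R$, Gaussian decay beating polynomial area growth, curvature bounds from \thref{lem:huibound} to control the boundary flux and to justify differentiating under the integral---is the right way to make the formal computation rigorous.
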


\begin{lem}
	\thlabel{lem:Es}
	For a mean curvature flow $M$ and rescaled flow $\tM$, $\tE(s) = (2\pi)^{\frac{N}{2}} E(t)$. 
\end{lem}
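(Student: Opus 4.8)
The plan is to prove the identity by a single change of variables, since $\tE(s)$ and $E(t)$ are integrals of a Gaussian weight against the area measure of \emph{the same} hypersurface, simply viewed at two different scales. First I would unwind both definitions with the base point fixed at $(x_0,t_0)=(0,0)$, so that (with $N$ general for now) $E(t) = \int_{M(t)} \left(4\pi(-t)\right)^{-N/2} e^{-|x|^2/(4(-t))} \: d\mu$ and $\tE(s) = \int_{\tM(s)} e^{-|\xi|^2/2} \dtm$. The only substantive input is the relation between the two integrals coming from the rescaling $\xi = \lambda(t) x$ with $\lambda(t) = \left(2(-t)\right)^{-1/2}$, so the entire proof reduces to tracking how each factor transforms under this dilation.

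Next I would apply the homothety $\xi = \lambda(t) x$ as a change of variables mapping $M(t)$ diffeomorphically onto $\tM(s)=\lambda(t) M(t)$. Two facts are needed. First, the induced area element of an $N$-dimensional surface scales by the $N$-th power of the linear factor, since the pullback metric scales by $\lambda^2$ and the square root of the determinant of an $N \times N$ Gram matrix then picks up $\lambda^N$; thus $\dtm = \lambda(t)^N \: d\mu$. Second, the Gaussian exponents match exactly: because $\lambda(t)^2 = \tfrac{1}{2(-t)}$, we get $|\xi|^2/2 = \lambda(t)^2 |x|^2 / 2 = |x|^2/(4(-t))$, so the weight $\tr(\xi) = e^{-|\xi|^2/2}$ pulls back to the precise exponential appearing in $\rho$. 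Substituting both into $\tE(s)$ yields $\tE(s) = \lambda(t)^N \int_{M(t)} e^{-|x|^2/(4(-t))} \: d\mu$.

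Finally I would collect the normalizing constants. Writing $\lambda(t)^N = \left(2(-t)\right)^{-N/2}$ and comparing with the factor $\left(4\pi(-t)\right)^{-N/2}$ in $\rho$, the common integral $\int_{M(t)} e^{-|x|^2/(4(-t))} \: d\mu$ cancels and the ratio of prefactors is $\left(2(-t)\right)^{-N/2} / \left(4\pi(-t)\right)^{-N/2} = (2\pi)^{N/2}$, giving $\tE(s) = (2\pi)^{N/2} E(t)$. There is no genuine analytic difficulty here; the only thing to be careful about is the bookkeeping of the powers of $\lambda(t)$, in particular justifying cleanly that the $N$-dimensional area element scales as $\lambda(t)^N$ and confirming that the exponent matching and the constant arithmetic are consistent with the conventions $\lambda(t)=(2(-t))^{-1/2}$ and $t_0-t=-t$. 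That verification is the main (and essentially the only) obstacle, and it is routine.
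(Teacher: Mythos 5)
Your proof is correct and is precisely the ``direct calculation'' the paper leaves to the reader: the change of variables $\xi=\lambda(t)x$, the scaling $\dtm = \lambda(t)^N\, d\mu$ of the area element, the matching of exponents via $\lambda(t)^2=\tfrac{1}{2(-t)}$, and the resulting ratio of prefactors $(2\pi)^{N/2}$ are all exactly what the lemma requires. No gaps; this matches the paper's (omitted) argument.
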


The proof is a direct calculation.

\subsection{Some Calculus}

\begin{lem}[Backwards Compactness Preservation]
	\thlabel{lem:evo}
	If $M$ is a compact, type-I mean curvature flow, then $\tM(s)$ is uniformly  bounded for all times $s \le s_0$. 
\end{lem}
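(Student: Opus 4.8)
The plan is to track the squared distance to the origin, $u(p,s) := |\tF(p,s)|^2$, and to bound its maximum over the (compact) rescaled surface as $s$ decreases from the reference time $s_0$. Since $M$ is compact, each $\tM(s)$ is compact, so $f(s) := \max_p u(p,s)$ and $R(s) := \sqrt{f(s)}$ are finite for each $s$; the content of the lemma is that $R(s)$ stays bounded by a \emph{single} constant for all $s \le s_0$.

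First I would compute the evolution of $u$ along the rescaled flow. Differentiating and using \eqref{eqn:r}, the Lagrangian (fixed-$p$) derivative is
\[
	\partial_s u = 2\tF \cdot \partial_s \tF = 2|\tF|^2 - 2\tH\,(\tF\cdot\nu).
\]
By Hamilton's trick, $f$ is locally Lipschitz and for a.e.\ $s$ one has $f'(s) = \partial_s u(p_s,s)$ at any point $p_s$ realizing the maximum. At such a point the tangential gradient $2\tF^{\top}$ of $u$ vanishes, so $\tF$ is purely normal and $|\tF\cdot\nu| = |\tF| = R(s)$. The type-I hypothesis, via the remark following \eqref{eqn:r}, bounds the rescaled mean curvature uniformly, $|\tH| \le C_1$, with $C_1$ depending only on $N$ and $C_0$. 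Combining these,
\[
	f'(s) = 2R^2 - 2\tH\,(\tF\cdot\nu) \ge 2R^2 - 2C_1 R = 2R\,(R - C_1),
\]
so wherever $R > 0$ I obtain the clean differential inequality $R'(s) \ge R(s) - C_1$ for a.e.\ $s$.

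This inequality encodes the heuristic that the rescaled evolution expands the surface: as soon as $R$ exceeds the curvature scale $C_1$, the distance to the origin is strictly increasing in $s$. I would close the argument with a barrier/continuity argument. Set $R_0 := \max\{R(s_0), C_1\}$ and suppose, for contradiction, that $R(s_1) > R_0$ for some $s_1 < s_0$. Then $R(s_1) > C_1$, and on $[s_1,s_0]$ the function $R$ cannot decrease to $C_1$: at the first time it would reach $C_1$ it would have been strictly increasing beforehand (since $R > C_1$ forces $R' > 0$), a contradiction. Hence $R > C_1$ throughout $[s_1,s_0]$, so $R$ is increasing there and $R(s_0) \ge R(s_1) > R_0 \ge R(s_0)$, again a contradiction. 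Therefore $R(s) \le R_0$ for all $s \le s_0$, giving $\tM(s) \subset \overline{B_{R_0}(0)}$ uniformly.

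The main obstacle is orienting the estimate correctly. The standard maximum principle applied to $\widetilde{\Delta} u$ yields only $f' \le 2f - 2N$, which controls growth \emph{forward} in time but is useless for a backward upper bound. The resolution is to retain the full Lagrangian derivative and use the type-I bound on $\tH$ to extract the \emph{lower} bound $f' \ge 2R(R-C_1)$ instead; this is precisely where the type-I hypothesis is essential, since an unbounded $\tH$ at the outermost point could otherwise drive $R$ to infinity as $s \to -\infty$. Compactness enters only to guarantee that the maximum is attained and that each $R(s)$ is finite.
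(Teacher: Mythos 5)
Your proof is correct and takes essentially the same approach as the paper: both compute $\partial_s |\tF|^2 = 2|\tF|^2 - 2\tH\,(\tF\cdot\nu)$ and use the type-I bound on the rescaled mean curvature to conclude that, going backward in time, $|\tF|$ cannot grow once it exceeds the curvature scale. The only difference is packaging --- the paper runs the resulting ODE inequality along each fixed trajectory $s \mapsto \tF(p,s)$, where Cauchy--Schwarz $|\tF\cdot\nu| \le |\tF|$ suffices without locating a maximum point, while you route it through the maximum function via Hamilton's trick; both are valid.
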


\begin{proof}
%

	From the type-I bound, we know $|\tH| \le C_0$. 
	Going back in time, 
	\begin{align*}
		\bd_{-s} |\tF|^2
		&= - \bd_{s} |\tF|^2
		= -2 \tF \cdot \bd_s \tF 
		= -2 \tF \cdot (\tF - \tH \tn) \\ 
		&= -2 \tF \cdot \tF + 2 \tF \cdot \tH \tn 
		\le -2|\tF|^2 + 2 C_0 |\tF| .
	\end{align*}

	So $\bd_s |\tF|^2$ is strictly negative whenever $|\tF| > C_0$. 
	Let 
	\[
		\Lambda:= \max\left\{ C_0, \max_{\tM(s_0)} |\tF| \right\} . 
	\]
	Thus, going back in time, $\tM(s)$ cannot escape the ball $B_{2\Lambda}(0)$. 
\end{proof}

\begin{cor}
	\thlabel{cor:ball}
	Assume $M$ has a singular point at 0. 
	Then $\tM(s) \cap B_N(0)$ is nonempty for ever $s \in \R$. 
\end{cor}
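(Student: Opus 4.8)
The plan is to track the distance from the origin to the nearest point of the rescaled flow and to show it never exceeds $N$. Set $u=|\tF|^2$ and let $m(s)=\min_{\tM(s)}u$. Because $M(t)$ is properly embedded, $|\tF|^2$ is a proper function on each $\tM(s)$, so this minimum is attained at some point $p_s$; it then suffices to prove $m(s)\le N$ for all $s$, since the minimizing point would satisfy $|\tF|=\sqrt{m(s)}\le\sqrt N\le N$ and hence lie in $B_N(0)$.

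The first step is a differential inequality for $m$. At a spatial minimizer $p_s$ the tangential gradient of $u$ vanishes, which forces $\tF$ to be purely normal, $\tF=(\tF\cdot\tn)\tn$. The second-derivative test says the Hessian of $u$, namely $2\bigl(g_{ij}-(\tF\cdot\tn)\tA_{ij}\bigr)$, is positive semidefinite there; taking its trace gives $(\tF\cdot\tn)\tH\le N$. Differentiating $u$ along the rescaled flow \eqref{eqn:r} yields $\partial_s u=2\tF\cdot(\tF-\tH\tn)=2|\tF|^2-2(\tF\cdot\tn)\tH$, so by the first-variation formula for the minimum I obtain $\tfrac{d}{ds}m(s)\ge 2\bigl(m(s)-N\bigr)$. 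Integrating, if $m(s_1)>N$ at some time $s_1$ then $m(s)\ge N+(m(s_1)-N)e^{2(s-s_1)}\to\infty$ as $s\to\infty$; in other words $\tM(s)$ would eventually leave every compact subset of $\R^3$.

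To rule this out I use that the origin is a genuine singular point. By \thref{lem:dE} the Gaussian area $\tE$ is monotone, and the presence of a singularity at the origin keeps its limit $\lim_{s\to\infty}\tE(s)$ strictly positive, so $\tE(s)\ge c>0$ along a sequence $s\to\infty$. On the other hand, uniform polynomial volume growth bounds the Gaussian tail $\int_{\tM(s)\cap\{|x|>R\}}\tr\dtm$ by a quantity tending to $0$ as $R\to\infty$ uniformly in $s$; hence a flow leaving every compact set would satisfy $\tE(s)\to 0$, a contradiction. Therefore $m(s)\le N$ for every $s$, and $\tM(s)\cap B_N(0)$ is nonempty.

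The routine part is the differential inequality of the second step, which needs only the second-derivative test at the minimizer together with \eqref{eqn:r}. I expect the main obstacle to be the third step: converting the qualitative hypothesis that the origin is singular into a quantitative, positive lower bound for $\tE(s)$ as $s\to\infty$, and simultaneously controlling the Gaussian tail through the polynomial volume growth so that escape to infinity is genuinely excluded. This is precisely where the singular-point assumption enters.
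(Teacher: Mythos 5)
Your differential inequality is correct, and it is in essence a self-contained derivation of the very fact the paper simply cites: the paper's proof invokes Proposition 2.2.6 of \cite{mant} (a sphere-comparison/avoidance argument giving $M(t) \cap Cl(B_{\sqrt{-2Nt}}(0)) \neq \emptyset$ for all $t$) and then rescales, whereas you rederive the rescaled version directly by a maximum-principle computation at the minimizer of $|\tF|^2$: the second-derivative test gives $(\tF \cdot \tn)\tH \le N$ there, and \eqref{eqn:r} then yields $\frac{d}{ds} m(s) \ge 2\bigl(m(s)-N\bigr)$ in the Hamilton's-trick/Dini sense. Up to that point the argument is sound, modulo the routine care needed to apply Hamilton's trick on a noncompact surface (properness and local boundedness of $\partial_s u$ near the minimizers handle this).

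The gap is your third step. The claim that a singular point at the origin forces $\lim_{s \to \infty} \tE(s) > 0$ is asserted, not proved, and it is not elementary: the standard route is a Gaussian-density lower bound at singular points, i.e.\ Brakke/White local regularity, which is far heavier machinery than this corollary should require and is established nowhere in the paper. The uniform-in-$s$ tail bound also needs an argument (the naive rescaling $Vol(\tM(s) \cap B_R(0)) \le \l^2 P(R/\l)$ blows up through the low-order terms of $P$ as $\l \to \infty$), and uniform polynomial volume growth is not even among the corollary's hypotheses. The irony is that none of this is needed: your own inequality already finishes the proof. If $m(s_1) > N$, then $m(s) \ge N + (m(s_1)-N)e^{2(s-s_1)}$ for $s \ge s_1$; undoing the rescaling, using $\dist(0,M(t))^2 = (-2t)\, m(s)$ and $e^{2s} = (-t)^{-1}$, gives
\[
	\dist(0,M(t))^2 \;\ge\; (-2t)N + (m(s_1)-N)(-2t_1) \;\ge\; (m(s_1)-N)(-2t_1) \;>\; 0
\]
for all $t \in [t_1, 0)$. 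So the unrescaled flow stays a fixed positive distance from the origin, and no sequence $\F(p_i,t_i) \to 0$ with $t_i \nearrow 0$ can exist, contradicting \thref{defn:sing} directly. Replacing your third step with this one line makes the proof complete, and arguably more self-contained than the paper's, since it avoids the external citation entirely.
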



\begin{proof}
	Assume without loss of generality that 0 is a singular point. 
	By Proposition 2.2.6~\cite{mant}, $M(t) \cap Cl(B_{\sqrt{-2Nt}}(0))$ is nonempty for all time. 
	Rescaling by $\l(t)=(-2t)^{-\frac{1}{2}}$, we find that $\tM(s) \cap Cl(B_{\sqrt{N}})$. 
	Our conclusion immediately follows. 

\end{proof}


	\section{Regularity}

We will need two time derivatives of $E$ later, which involves fourth order terms, so we need high-regularity control to properly manage convergence. 
Huisken takes care of this forward in time in~\cite{hui}, but the proof relies on a maximum principle. 
We need to prove bounds for $|\tc^m \tA|$ backward as well.
We refer to a parabolic regularity result in~\cite{eck}.

\begin{lem}[Proposition 3.22 of~\cite{eck}]
	\thlabel{lem:eck}
	Let $(M_t)$ be a smooth, properly embedded solution of mean curvature flow in $B_{\rho}(x_0) \times (t_0-\rho^2,t_0)$
	\[
		|A(x)|^2 \le \frac{c_0}{\rho^2}
	\]
	for all $t \in (t_0-\rho^2,t_0)$ and $x \in M_t \cap B_{\rho}(x_0)$. 
	Then for every $m \in \N$ there is a constant $c_m=c_m(N,m,c_0)$ such that for all $x \in M_t \cap B_{\frac{\rho}{2}(x_0)}$ and $t \in (t_0-\frac{\rho^2}{4})$,
	\[
		|\c^m A(x)|^2 \le \frac{c_m}{\rho^{2(m+1)}} .
	\]
\end{lem}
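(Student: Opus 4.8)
The plan is to establish this as a localized Bernstein--Shi-type interior estimate, proved by induction on $m$ using the evolution equations for the covariant derivatives of $A$ together with a maximum-principle argument applied to a carefully chosen product quantity. First I would reduce to the normalized case $\rho = 1$, $x_0 = 0$, $t_0 = 0$ by parabolic rescaling: under $x \mapsto \rho^{-1} x$, $t \mapsto \rho^{-2} t$ mean curvature flow is preserved, $A$ scales like $\rho A$ and $\c^m A$ like $\rho^{m+1}\c^m A$, so both the hypothesis and the conclusion are scale-invariant up to the stated powers of $\rho$. It then suffices to show that if $|A|^2 \le c_0$ on $B_1(0) \times (-1,0)$ then $|\c^m A|^2 \le c_m$ on the half-size parabolic cylinder.

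Next I would record the evolution equations. Under \eqref{eqn:mcf} one has Simons' identity $\bd_t A = \Delta A + A * A * A$, where $P * Q$ denotes a finite linear combination of contractions of $P \otimes Q$; commuting $\c$ past $\Delta$ repeatedly then gives, schematically,
\[
	\bd_t \c^m A = \Delta \c^m A + \sum_{i+j+k=m} \c^i A * \c^j A * \c^k A .
\]
Squaring and contracting yields
\[
	\bd_t |\c^m A|^2 = \Delta |\c^m A|^2 - 2|\c^{m+1} A|^2 + \sum_{i+j+k=m} \c^i A * \c^j A * \c^k A * \c^m A .
\]
The crucial structural features are the \emph{good} negative gradient term $-2|\c^{m+1}A|^2$ and the fact that every reaction term is cubic in the derivatives of $A$ with total order $m$.

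For the inductive step, suppose $|\c^j A|^2 \le c_j$ holds on a slightly larger cylinder for every $j < m$. Following Ecker--Huisken (and Shi in the Ricci-flow setting), I would consider
\[
	f = \left(\alpha + |\c^{m-1}A|^2\right)|\c^m A|^2 ,
\]
where $\alpha$ is a large constant depending on $c_0,\dots,c_{m-1}$. The evolution of $|\c^{m-1}A|^2$ contributes a good term $-2|\c^m A|^2$, which upon multiplication into $f$ produces a definite negative term of order $-|\c^m A|^4$; the reaction terms in $\bd_t|\c^m A|^2$ are controlled by the inductive bounds together with $|\c^m A|^2$ itself, and the cross term $-2\c|\c^{m-1}A|^2 \cdot \c|\c^m A|^2$ is absorbed into $-\alpha|\c^{m+1}A|^2$ and $-|\c^m A|^4$ by Young's inequality once $\alpha$ is large. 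This produces a differential inequality of the form $(\bd_t - \Delta) f \le C - c\,f^2$. Multiplying by a space-time cutoff $\eta$ built from $|x - x_0|$ and $t$, and evaluating $\eta f$ at an interior space-time maximum, where $\bd_t(\eta f) \ge 0$, $\Delta(\eta f) \le 0$, and $\c(\eta f) = 0$, converts the differential inequality into an algebraic bound $f \le C$, hence $|\c^m A|^2 \le c_m$ on the smaller cylinder.

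I expect the main obstacle to be the absorption bookkeeping: one must verify that every cubic reaction term and the cross term are genuinely dominated by the good terms $-|\c^m A|^4$ and $-\alpha|\c^{m+1}A|^2$ after the correct choice of $\alpha$, and this requires the precise structural form of the evolution equations rather than merely their schematic shape. A secondary technical point is that the cutoff lives on the evolving surface, so $\bd_t \eta$ and the tangential Laplacian $\Delta \eta$ generate extra terms along the flow; these are handled by taking $\eta$ to be a function of ambient distance, whose evolution and intrinsic Laplacian are controlled by the bound $|A| \le \sqrt{c_0}$ on its support.
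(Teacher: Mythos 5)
The paper offers no proof of this lemma: it is quoted directly from Proposition 3.22 of \cite{eck}, and the argument given there (going back to Ecker--Huisken's interior estimates) is exactly the one you outline---parabolic rescaling to $\rho = 1$, induction on $m$ via the schematic evolution equations for $\c^m A$, a Shi-type product quantity $\left(\alpha + |\c^{m-1}A|^2\right)|\c^m A|^2$, and a cutoff-localized maximum principle. Your reconstruction is correct and matches the cited source's proof in all essentials.
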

	\begin{center}
		\begin{tikzpicture}
			\draw[fill] (0,0) circle (.05);

			\draw[ultra thick,blue] (0,0) ellipse (1 and .3);
			\draw[ultra thick,blue] (-1,0) -- (-1,-1);
			\draw[ultra thick,blue] (1,0) -- (1,-1);
			\draw[ultra thick,blue] (-1,-1) arc (180:360:1 and .3);
			\draw[ultra thick,blue,dashed] (1,-1) arc (0:180:1 and .3);

			\draw[ultra thick] (0,0) ellipse (2 and .6);
			\draw[ultra thick] (-2,0) -- (-2,-4);
			\draw[ultra thick] (2,0) -- (2,-4);
			\draw[ultra thick] (-2,-4) arc (180:360:2 and .6);
			\draw[ultra thick,dashed] (2,-4) arc (0:180:2 and .6);

			\node[right] at (0,0) {$x_0$};
			\node[right] at (2,0) {$t=t_0$};
			\node[right] at (2,-4) {$t=t_0-\rho^2$};
			\draw[blue] (1.2,-1) -- (2.5,-1) node[right] {$t=t_0-\frac{\rho^2}{4}$};
			\draw[decoration={brace,raise=25pt},decorate,ultra thick] (0,0) -- node[above=30pt] {$\rho$} (2,0);
			\draw[decoration={brace,mirror,raise=15},decorate,ultra thick,blue] (0,-1) -- node[below=20pt] {$\frac{\rho}{2}$} (1,-1);

			\draw[blue] (-.8,-.7) -- (-2.5,-.7) node[left] {$|\c^m A|^2 \le \frac{c_m}{\rho^{2(m+1)}}$};
			\draw (-1.3,-2.5) -- (-2.5,-2.5) node[left] {$|A|^2 \le \frac{c_0}{\rho^2}$};
		\end{tikzpicture}
	\end{center}

%


	Now we want to use the above lemma to get a bound on covariant derivatives in the rescaled flow, and we want to do so for all time. 
	For $s>0$, Huisken did this in Proposition 2.3 of~\cite{hui} (\thref{lem:huibound} of this work). 
	For $s<0$, we take advantage of the type-I bound. 
	In the nonrescaled setting, going farther back in time forces the curvature to decay. 
	This allows us to choose larger $\rho$ for more control on $|\c A|$. 

	\begin{lem}[Ancient Regularity]
	\thlabel{lem:ab}
	Let $M$ be a smooth, properly embedded, ancient, type-I mean curvature flow.
	Then for $m \in \N$, there is $c_m>0$ so that for each $t \in \R^-$,
	\[
		|\c^m A| \le \sqrt{c_m} \l^{m+1}(t) 
		= \sqrt{c_m} (-2t)^{-\frac{m+1}{2}} 
	\]
	uniformly over $M(t)$. 
\end{lem}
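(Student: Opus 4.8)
The plan is to apply the parabolic interior estimate \thref{lem:eck} on a parabolic cylinder whose spatial and temporal scales are chosen to match the intrinsic scale $\sqrt{-t}$ of the flow at time $t$, and then to feed in the type-I bound to verify the hypothesis of that lemma. I would fix $t < 0$ and a point $x_0 \in M(t)$; since the estimate produced is uniform in $x_0$ and the construction is uniform in $t$, this suffices.

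First I would center a cylinder at $(x_0,t_0)$ with $t_0 = \frac{t}{2}$ and radius $\rho = 2\sqrt{-t}$, so that the cylinder times run over $(t_0 - \rho^2, t_0) = (\frac{9t}{2}, \frac{t}{2})$, all of which are negative (so the ancient flow is defined there and the type-I bound applies). The two requirements to check are: (i) the base point $(x_0,t)$ lies in the smaller cylinder $B_{\rho/2}(x_0) \times (t_0 - \frac{\rho^2}{4}, t_0)$ where the conclusion of \thref{lem:eck} lives, and (ii) the curvature hypothesis $|A|^2 \le c_0/\rho^2$ holds on the full cylinder. For (i), $x_0 \in B_{\rho/2}(x_0)$ trivially, and since $\rho^2/4 = -t$ one has $t \in (\frac{3t}{2}, \frac{t}{2})$; this is precisely the reason for taking $\rho$ a fixed multiple of $\sqrt{-t}$ strictly larger than $\sqrt{-2t}$.

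For (ii), the type-I bound gives $|A(\cdot,\tau)|^2 \le C_0^2 \l^2(\tau) = C_0^2/(-2\tau)$, which is increasing in $\tau$, so its supremum over the cylinder is attained at the top time $\tau = t_0 = \frac{t}{2}$, namely $C_0^2/(-t)$. Since $\rho^2 = 4(-t)$, choosing $c_0 = 4C_0^2$ makes $C_0^2/(-t) = c_0/\rho^2$, so the hypothesis holds with a constant $c_0$ independent of $t$. Applying \thref{lem:eck} at $(x_0,t)$ then yields $|\c^m A(x_0,t)|^2 \le c_m/\rho^{2(m+1)} = c_m/(2\sqrt{-t})^{2(m+1)}$, and since $\l^{2(m+1)}(t) = (-2t)^{-(m+1)}$, rewriting $\rho = 2\sqrt{-t}$ and absorbing the resulting power of $2$ into the constant gives the claimed bound $|\c^m A| \le \sqrt{c_m}\,\l^{m+1}(t)$.

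The main thing to get right is the interaction in (ii): because the type-I curvature bound depends on time and worsens as $\tau \nearrow 0$, I must confirm that the latest time in the cylinder still satisfies the hypothesis at the chosen scale, and that the constant $c_0$ one needs is independent of $t$. This works precisely because everything scales parabolically with $\sqrt{-t}$ --- the same feature the text flags as ``larger $\rho$ for more control'' --- so the single choice $c_0 = 4C_0^2$ serves for every $t$, and the final constant from \thref{lem:eck} depends only on $N$, $m$, and $C_0$.
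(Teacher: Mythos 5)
Your proposal is correct and follows essentially the same strategy as the paper's proof: apply \thref{lem:eck} on a parabolic cylinder scaled like $\sqrt{-t}$, with the top time a fixed fraction of $t$ so that the type-I bound (worst at the top of the cylinder) verifies the curvature hypothesis with a $t$-independent constant $c_0$. The only differences are the numerical choices (you take $t_0=\tfrac{t}{2}$, $\rho=2\sqrt{-t}$, the paper takes $t_0=\tfrac{3}{4}t$, $\rho=\sqrt{-2t}$), which affect nothing but the constants.
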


	\begin{proof}
	Let $t \in \R^-$, and $x \in M(t)$. 
	With the goal of applying \thref{lem:eck}, choose $\rho=\l^{-1}(t)=(-2t)^{-\frac{1}{2}}$, $x_0=x$, and $t_0 = t+\frac{\rho^2}{8}=\frac{3}{4}t < 0$. 
	That puts our point of interest, $(x_0,t)=(x,t)$, at the center of the inner cylinder, with $t_0$ at the top of the cylinders. 
	\\

	Due to the type-I bound, $|A(y,\tau)| \le C_0 \l(t_0)$ for every $(y,\tau)$ in the outer cylinder since $\tau \le t_0$. 
	Now setting $c_0 = \frac{2}{\sqrt{3}}C_0$,
		\[
			|A(y,\tau)|
			\le C_0 \l ( t_0 ) 
			= C_0 \l\left( \frac{3}{4}t \right) 
			= C_0 \frac{2}{\sqrt{3}} \l(t) 
			= \frac{c_0}{\rho} .
		\]
		Now recall $(x,t)$ is in the inner cylinder. 
		Then since $|A| \le \frac{c_0}{\rho^2}$ in the outer cylinder, \thref{lem:eck} says that for every $m \in \N$, there is $c_m$ so
		$|\c^m A|^2 \le \frac{c_m}{\rho^{2(m+1)}}$ in the inner cylinder. 
		Rather,
		\[
			|\c^m A(x,t)|
			\le \frac{\sqrt{c_m}}{\rho^{m+1}} 
			= \sqrt{c_m} \l^{m+1}(t) .
		\]

\end{proof}

\begin{cor}[Eternal Regularity]
	\thlabel{cor:bound}
	Let $M$ be a smooth, properly embedded, ancient, type-I mean curvature flow. 
	Then for $m \in \N$, there is $C_m>0$ so that 
	\[
		\sup_{\xi \in \tM(s), s \in \R} |\tc^m \tA| \le C_m .
	\]
\end{cor}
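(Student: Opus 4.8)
The plan is to derive the eternal bound on the rescaled flow directly from the ancient bound of \thref{lem:ab} by tracking how covariant derivatives of the second fundamental form transform under the spatial dilation that defines the rescaling. At each fixed rescaled time $s$ (corresponding to $t$ via $s = -\frac{1}{2}\log(-t)$), the surface $\tM(s)$ is simply the dilation $\l(t) M(t)$ of the original time slice, with $\l(t) = (-2t)^{-\frac{1}{2}}$. The time reparameterization plays no role in the intrinsic spatial geometry of a single slice, so for the purpose of bounding $|\tc^m \tA|$ only the constant spatial scaling factor $\l(t)$ matters.

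First I would establish the scaling law under a constant dilation by $\l$. The induced metric scales as $\tilde g = \l^2 g$, so the inverse metric scales by $\l^{-2}$ while the Christoffel symbols are unchanged, since the factor $\l^2$ in $\tilde g$ cancels against the $\l^{-2}$ in the inverse metric appearing in their definition. Because $\tA = \l A$ at the level of components, and covariant differentiation introduces no further powers of $\l$, the tensor $\tc^m \tA$ has components equal to $\l$ times those of $\c^m A$. Computing the norm with respect to $\tilde g$ then contributes one factor of $\l^{-2}$ for each of the $m+2$ contracted index pairs, giving
\[
	|\tc^m \tA|^2 = \l^{-2(m+2)} \cdot \l^2 \, |\c^m A|^2 = \l^{-2(m+1)} |\c^m A|^2,
\]
that is, $|\tc^m \tA| = \l^{-(m+1)}(t)\, |\c^m A|$. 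A dimensional check confirms the exponent: $|\c^m A|$ has units of inverse length to the power $m+1$, which scales by $\l^{-(m+1)}$ under magnification by $\l$.

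Then I would combine this with \thref{lem:ab}, which provides $|\c^m A| \le \sqrt{c_m}\, \l^{m+1}(t)$ uniformly over $M(t)$ for every $t \in \R^-$. Multiplying the two relations yields
\[
	|\tc^m \tA| = \l^{-(m+1)}(t)\, |\c^m A| \le \l^{-(m+1)}(t) \cdot \sqrt{c_m}\, \l^{m+1}(t) = \sqrt{c_m}.
\]
Since the right-hand side is independent of $t$, and since $t$ ranging over $\R^-$ corresponds to $s$ ranging over all of $\R$, setting $C_m := \sqrt{c_m}$ gives the claimed uniform bound over $\tM(s)$ for all $s \in \R$.

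The one step requiring genuine care is the scaling law for $\tc^m \tA$: one must verify that covariant differentiation contributes no hidden powers of $\l$ beyond those carried by the tensor components and the inverse metric. This hinges precisely on the scale-invariance of the Christoffel symbols under a constant dilation, which is what forces the exponent to be exactly $-(m+1)$ and thereby produces the clean cancellation against the $\l^{m+1}(t)$ growth in \thref{lem:ab}. Everything else is routine bookkeeping.
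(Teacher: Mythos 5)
Your proof is correct, and it follows a genuinely cleaner route than the paper's. Both arguments combine a dilation scaling law for $|\tc^m \tA|$ with \thref{lem:ab}, but they differ at the crucial exponent. Your relation $|\tc^m \tA| = \l^{-(m+1)}(t)\,|\c^m A|$ is the right one: $\c^m A$ is a $(0,m+2)$-tensor whose components acquire a single factor of $\l$ under the dilation, and the norm contracts with $m+2$ copies of $\tilde{g}^{-1} = \l^{-2}g^{-1}$, giving $\l^{2-2(m+2)} = \l^{-2(m+1)}$ on the square. This makes the $\l^{m+1}(t)$ growth in \thref{lem:ab} cancel exactly, so $|\tc^m \tA| \le \sqrt{c_m}$ holds for every $s \in \R$ in one stroke (as $t$ runs over $\R^-$, $s$ runs over all of $\R$). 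The paper instead asserts $|\tc^m \tA| = \l^{1-m}|\c^m A|$, which is dimensionally incorrect --- for $m=0$ it would read $|\tA| = \l |A|$, whereas a sphere of radius $r$ dilated by $\l$ has $|\tA| = \l^{-1}|A|$; indeed it is your exponent, not the paper's, that turns the type-I hypothesis into the uniform bound on $\tH$ asserted in the paper's earlier remark. Because of that faulty exponent, the paper's computation only yields $|\tc^m \tA| \le \frac{\sqrt{c_m}}{2}e^{2s}$, which degenerates as $s \to +\infty$, forcing a split into $s \le 0$ (handled by the computation) and $s > 0$ (handled by invoking Huisken's forward-in-time estimate, \thref{lem:huibound}). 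Your argument needs neither the split nor \thref{lem:huibound}: it is self-contained given \thref{lem:ab}. The only thing the paper's two-regime structure buys is accidental robustness --- Huisken's lemma is valid independently of the scaling error, so the corollary's conclusion survives as printed --- but as a derivation, yours is the correct and simpler one, and the scaling identity in the paper's proof should be amended to match it.
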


\begin{proof}
	Recall $\l(t) = \frac{e^s}{\sqrt{2}}$, so that we have from \thref{lem:ab}, 
	\[
		|\tc^m \tA|
		= \l^{1-m} |\c^m A|
		\le \sqrt{c_m} \l^{1-m} \l^{m+1} 
		= \sqrt{c_m} \l^2 
		= \frac{\sqrt{c_m}}{2} e^{2s}.
	\]

	Now, for $s \in \left( -\infty,0 \right)$ , $|\tc^m \tA| \le \frac{\sqrt{c_m}}{2}$. 
	Then \thref{lem:huibound} provides a $C_m \ge \frac{\sqrt{c_m}}{2}$ for which $|\tc^m \tA| \le C_m$ for $s \in \left( 0, \infty \right)  $. 
	Therefore, 
	\[
		|\tc^{m} \tA| \le C_m
	\]
	for all time. 
\end{proof}

\section{Proving the Main Theorem}

\begin{thm}[Subsequential Limits]
		\thlabel{thm:subseq}
		Let $M$ be a smooth, properly embedded, ancient, type-I, mean-convex, two-dimensional mean curvature flow with uniform polynomial growth. 
		Assume $M$ has a singular point at the origin at time $t=0$. 

		Then for every sequence of rescaled times $s_i \searrow \infty$, there is a subsequence $\left\{ s_{i_j} \right\}$ so that $\dsp \lim_{j \to \infty} \tM(s_{i_j})$ converges to some $\tM_{-\infty}$ in $C_{loc}^2$ in the graph sense. 
		Furthermore, $\tM_{-\infty}$ is either a plane passing through 0, a cylinder centered at 0 with radius 1, or a sphere centered at 0 with radius $\sqrt{2}$. 

		All the same can be said of some sequence $s_i \nearrow \infty$ and a limit $\tM_{+\infty}$. Although in that case we can rule out the plane. 
	\end{thm}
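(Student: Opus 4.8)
The plan is to combine the uniform geometric bounds from \thref{cor:bound} with the monotonicity of the Gaussian area (\thref{lem:dE}) to force every subsequential limit to be a stationary point of the rescaled flow --- that is, a self-shrinker --- and then to invoke Huisken's classification of mean-convex shrinkers. Throughout I read ``$s_i\searrow\infty$'' as $s_i\to-\infty$, corresponding to the backward limit $\tM_{-\infty}$, and $s_i\nearrow\infty$ as $s_i\to+\infty$.

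First I would set up space-time compactness. For a sequence $s_i\to-\infty$, consider the time-translated rescaled flows $\tM_i(s):=\tM(s+s_i)$, each of which solves \eqref{eqn:r}. By \thref{cor:bound} every covariant derivative $|\tc^m\tA|$ is bounded uniformly in space and in $s$, and by \thref{cor:ball} each slice meets $\overline{B_{\sqrt N}(0)}$; together with mean-convexity, embeddedness, and the uniform polynomial volume growth, these yield uniformly bounded geometry with controlled area in every ball. A standard Arzel\`a--Ascoli/graph-compactness argument then extracts a subsequence converging in $C^2_{loc}$ (in fact smoothly) in the graph sense to a nonempty eternal solution $\tM_\infty(s)$ of \eqref{eqn:r}; set $\tM_{-\infty}:=\tM_\infty(0)$, which is nonempty by the ball bound.

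Next I would identify the limit as a shrinker through the energy. By \thref{lem:Es} and the boundedness of $E(t)$ (the Gaussian weight $\tr$ decays against the polynomial volume growth), $\tE(s)$ is finite, bounded, and by \thref{lem:dE} monotone, so it admits finite limits $\tE_{\pm\infty}$ as $s\to\pm\infty$. For the translated flows, $\tE(s+s_i)\to\tE_{-\infty}$ for every fixed $s$, and the Gaussian weight lets a dominated-convergence argument pass $\tE$ to the $C^2_{loc}$ limit, so $\tE$ is constant $\equiv\tE_{-\infty}$ along $\tM_\infty$. Hence $\partial_s\tE\equiv 0$, and since $\tr>0$, \thref{lem:dE} forces $\tF^\perp-\tH\tn\equiv 0$; that is, $\tM_\infty$ is static in rescaled coordinates and $\tM_{-\infty}$ satisfies the shrinker equation $\tH=\langle\tF,\tn\rangle$. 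Then $\tM_{-\infty}$ is a smooth, properly embedded, mean-convex self-shrinker of bounded geometry in $\R^3$, and Huisken's classification in~\cite{hui}, together with $N=2$ pinning down the topology, leaves only the plane through $0$ (where $\tH=0$ forces $0$ to lie on it), the cylinder of radius $1$, or the sphere of radius $\sqrt2$ (the radii being read off from $\tH=\langle\tF,\tn\rangle$). The forward case $s_i\to+\infty$ is identical, producing $\tM_{+\infty}$; here a multiplicity-one plane as the tangent flow at the origin would, by the local regularity theorem, contradict the origin being a singular point where $|A|\to\infty$, so the plane is excluded.

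The main obstacle is the continuity of the Gaussian area under the purely local convergence: I must ensure no mass escapes to spatial infinity, so that $\tE(\tM_{-\infty})$ truly equals $\tE_{-\infty}$ rather than dropping. This is precisely where non-collapsing and uniform polynomial volume growth are indispensable, bounding the area in large balls uniformly in $s$ while the Gaussian weight controls the tail; the remaining subtlety is upgrading $C^2_{loc}$ convergence to legitimacy of the shrinker equation on the (possibly noncompact) limit, which the uniform higher-order bounds of \thref{cor:bound} handle.
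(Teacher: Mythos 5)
Your proposal is correct in outline, but it takes a genuinely different route from the paper at both key steps. The paper's proof is almost entirely by citation: existence of the subsequential limits follows from a compactness result (Corollary 1.6 of~\cite{sonc}) using exactly the ingredients you assemble (\thref{cor:bound} plus \thref{cor:ball}); the identification of the limits as self-shrinkers is quoted from~\cite{son}; the backward limit is then classified by Theorem 1.11 of~\cite{hk}, the forward limit by Theorem 1 of~\cite{whimc}, with the plane excluded by Corollary 1.8 of~\cite{sw}; and the radii are read off from $\tH = \tF \cdot \tn$, just as you do. You instead derive the shrinker property directly from the monotonicity formula: translate in time, extract an eternal limit flow, and use the existence of $\tE_{\pm\infty}$ to force $\tF^{\perp}-\tH\tn \equiv 0$ on the limit. (In fact, for this step Fatou/lower semicontinuity of $\int\int |\tF^{\perp}-\tH\tn|^2\tr$ suffices, so the no-mass-escape issue you flag as the main obstacle can be bypassed here; it genuinely matters only for the paper's subsequent lemma identifying $\tE_{\pm\infty}$ with the Gaussian areas of the limits.) You then classify both limits by Huisken's shrinker classification and exclude the forward plane via local regularity, where the paper cites~\cite{sw}. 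Your route is more self-contained, and it has the virtue of using only hypotheses actually stated in \thref{thm:subseq}, whereas the paper's appeal to~\cite{hk} imports the $\alpha$-non-collapsing framework, which appears in the main theorem but not in this theorem's hypotheses.

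There is, however, one genuine gap, in the classification step. Huisken's classification of self-shrinkers with $H \ge 0$, bounded $|A|$, and polynomial volume growth in $\R^3$ (the noncompact case is in his 1993 paper, not the 1990 paper the present work cites as~\cite{hui}, which treats only the compact case) lists \emph{four} families, not three: the plane, the sphere, the cylinder $\SS^1 \times \R$, and the products $\Gamma \times \R$ where $\Gamma$ is an Abresch--Langer curve. Your phrase ``$N=2$ pinning down the topology'' has the role of the dimension backwards: $N=2$ is precisely where the extra Abresch--Langer family occurs (in the paper, $N=2$ is needed for a different reason, namely to prevent generalized cylinders with differing numbers of flat factors; see the discussion preceding \thref{prop:limit}). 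What eliminates $\Gamma \times \R$ is embeddedness: the nontrivial Abresch--Langer curves are immersed with self-intersections, while your limit is properly embedded by construction, since the graph convergence of \thref{defn:to} builds embeddedness of the limit in. The repair is one sentence, but as written your list of possible limits is incomplete, and the exclusion must be stated.
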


	\begin{proof}
		Since $|\c^m A| \le C_m$ by \thref{cor:bound} and $\tM(s) \cap B_{N}(0)$ is nonempty by \thref{cor:ball}, $\tM_{-\infty}$ exists by Corollary 1.6 of~\cite{sonc}. 
		We know from (5) of~\cite{son} that $\tM_{-\infty}$ is a tangent flow, or blowdown soliton. 
		Therefore Theorem 1.11 of~\cite{hk} says that $\tM_{-\infty}$ is either a plane, cylinder, or sphere. 
		\\

		Again, $\tM_{+\infty}$ exists due to Corollary 1.6 of~\cite{sonc}. 
		Since $\tM_{+\infty}$ is a tangent flow, we know it is either a plane, cylinder, or sphere by Theorem 1 of~\cite{whimc}. 
		However, Corollary 1.8 of~\cite{sw} rules out the plane for tangent flows at first singularities for mean-convex flows. 
		\\

		It follows from (\ref{eqn:r}) that for a stationary sphere or cylinder, $\tH = \tF \cdot \tn$. 
		The necessary radii follow directly from there.

	\end{proof}

\begin{lem}
		The limits $\dsp \tE_{\pm \infty} := \lim_{s \to \pm \infty} \tE(s)$ exist. 
	Furthermore, the limits $\tE_{\pm\infty}$ are equal to the Gaussian areas of $\tM_{\pm\infty}$. 
\end{lem}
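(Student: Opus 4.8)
The plan is to prove existence first, by combining the monotonicity furnished by \thref{lem:dE} with a uniform bound coming from the polynomial volume growth hypothesis, and then to identify the limits with the Gaussian areas of $\tM_{\pm\infty}$ by a tightness argument that separates a fixed ball from its exterior.

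For existence, note that by \thref{lem:dE} the derivative $\bd_s \tE(s) = \int_{\tM(s)} |\tF^{\perp} - \tH \tn|^2 \tr \dtm$ is an integral of a perfect square, so $\bd_s \tE$ has a fixed sign and $\tE$ is monotone in $s$. Since $\tr > 0$, we have $\tE \ge 0$, which bounds $\tE$ from one side. For the other side I would invoke \thref{lem:Es} to write $\tE(s) = (2\pi)^{\frac{N}{2}} E(t)$ and estimate $E(t)$ directly: decomposing $\int_{M(t)} \rho \dtm$ over concentric annuli and integrating by parts against $\mathrm{Vol}(M(t) \cap B_r)$, the rapid Gaussian decay of $\rho$ against the uniform (at most quadratic, since $N=2$) polynomial volume bound yields $E(t) \le C$ uniformly in $t \in \R^-$. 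A bounded monotone function has finite limits at both ends, so $\tE_{\pm\infty}$ exist.

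To identify $\tE_{-\infty}$, fix any sequence $s_i \searrow -\infty$ and pass by \thref{thm:subseq} to a subsequence along which $\tM(s_{i_j}) \to \tM_{-\infty}$ in $C^2_{loc}$ in the graph sense of \thref{defn:to}. On a fixed ball $B_R$, the graph description gives height functions tending to $0$ in $C^2$ over $\tM_{-\infty} \cap B_R$, so both the weight $\tr$ and the area element converge uniformly there; hence $\int_{\tM(s_{i_j}) \cap B_R} \tr \dtm \to \int_{\tM_{-\infty} \cap B_R} \tr \dtm$. For the exterior I would write $\int_{\tM(s) \wo B_R} \tr \dtm$ as a Stieltjes integral against $\mathrm{Vol}(\tM(s) \cap B_r)$ and integrate by parts; a uniform-in-$s$ polynomial bound $\mathrm{Vol}(\tM(s) \cap B_r) \le \tilde{Q}(r)$ then forces this tail to $0$ as $R \to \infty$ uniformly in $s$, while the corresponding tail for $\tM_{-\infty}$ is small because it is a plane, cylinder, or sphere. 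Combining the two estimates gives $\tE(s_{i_j}) \to \int_{\tM_{-\infty}} \tr \dtm$, and since the full limit $\tE_{-\infty}$ already exists it must equal this Gaussian area. The identical argument at $s \nearrow \infty$, again via \thref{thm:subseq}, handles $\tE_{+\infty}$.

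The main obstacle is the uniform-in-$s$ tail bound, i.e.\ uniform polynomial volume growth for the \emph{rescaled} flow. The hypothesis supplies uniform growth only for $M(t)$, and the dilation $\xi = \l(t) x$ distorts the polynomial as $s$ varies, so the bound does not transfer verbatim. I would instead extract a uniform Euclidean area-ratio estimate $\mathrm{Vol}(\tM(s) \cap B_r(\xi_0)) \le C r^N$ from the uniform boundedness of the shifted Gaussian densities $E_{(x_0,t_0)}$, which follows from Huisken's monotonicity applied at arbitrary centers together with the bound established above; this uniform ratio bound is exactly what is needed to make the tail control uniform in $s$.
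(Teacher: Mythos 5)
Your proposal is correct and, in structure, it is the same as the paper's proof: existence of $\tE_{\pm\infty}$ from the monotonicity in \thref{lem:dE} together with boundedness of $\tE$ via \thref{lem:Es} and the volume-growth hypothesis, and identification of the limits by splitting $\tE(s_i)$ into an integral over a fixed ball (handled by graph convergence plus dominated convergence) and a tail (handled by a uniform-in-$i$ smallness estimate). The one substantive difference is the issue you flag at the end, and you are right to flag it: the paper simply asserts ``by uniform polynomial volume growth, there exists $R>0$ such that $\int_{\tM(s_i)\wo B_R(0)} \tr \dtm_i < \e$ for all $i$,'' applying the hypothesis about $M(t)$ directly to the rescaled surfaces. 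As you observe, this does not transfer verbatim: one only gets $\mathrm{Vol}\bigl(\tM(s)\cap B_r(0)\bigr) \le \l(t)^N P\bigl(r/\l(t)\bigr)$, and every monomial of $P$ of degree $k \ne N$ picks up a factor $\l^{N-k}$ that degenerates as $s \to -\infty$ or $s \to +\infty$. Your repair --- uniform bounds on the shifted Gaussian quantities $E_{(x_0,t_0)}$ via Huisken's monotonicity at arbitrary centers, converted into a scale-invariant area-ratio bound $\mathrm{Vol}\bigl(\tM(s)\cap B_r(\xi_0)\bigr) \le C r^N$ --- is the standard way to make this step rigorous, and it is exactly what the tail estimate needs, since an area-ratio bound is invariant under the parabolic rescaling. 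Note that bounding the monotone quantities requires control as $t \to -\infty$, and your annulus computation shows this needs $\deg P \le N$ (your parenthetical ``at most quadratic''); the same restriction is silently needed for the paper's own claim that $E(t)$ is bounded on $\R^-$, so you have made explicit an assumption the paper uses implicitly. Two minor points: your ``fixed sign, hence monotone'' phrasing is a good way to sidestep the missing minus sign in the statement of \thref{lem:dE} (Huisken's formula makes $\tE$ decreasing, as the paper's proof uses); and near $\bd B_R$ you should, as the paper does with its $R$ versus $R+\e$ device, account for the fact that $\tM(s_i)\cap B_R(0)$ need not be exactly a graph over $\tM_{-\infty}\cap B_R(0)$ --- a technicality, not a gap.
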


\begin{proof} ~

	\paragraph{The Limits $\tE_{\pm \infty}$ Exist}
		Since $M(t)$ exhibits uniform polynomial volume growth, $E(t)$ is bounded for $t \in \R^-$. 
		Then by \thref{lem:Es}, $\tE(s)$ is also bounded for all $s \in \R$. 
		We know from \thref{lem:dE} that $\tE$ is decreasing in time and bounded below by 0. 
		Therefore, its limits at times $\pm \infty$ both exist. 
		We denote them $\tE_{\pm \infty}$. 

		\paragraph{Gaussian areas}
		We do the proof for $\tM_{-\infty}$, and the proof for $\tM_{+\infty}$ is identical. 
		One will notice below that different radii $R+\e$ and $R$ are used in the domains for integrals. 
		This is of little interest, but necessary to accomodate the normal vectors to $\tM_{-\infty} \cap B_R(0)$, which leave the ball near the boundary. 
		\\

Let $0 < \e < 1$. 
By uniform polynomial volume growth, there exists $R >0$ such that 
	\[
		\int_{\tM(s_i) \wo B_R(0)} \tr \dtm_i < \e
	\]
	for all $i$ and also for $\tM(s_i)$ replaced by $\tM_{-\infty}$. 
	By Corollary 1.6 of~\cite{sonc}, for large $i$ there are open $V_i \sub \tM_{-\infty} \cap B_{R+\e}(0)$ and $f_i:V_i \to \R$ with $\|f_i\|_{C^1} < \e$ such that 
	\[
		\f_i(x) := x + f_i(x) \tn_{-\infty}(x) 
	\]
	is a diffeomorphism from $V_i$ onto $\tM(s_i) \wo B_R(0)$. 
	\\

	Then 
	\[
		\int_{\tM(s_i) \cap B_R(0)} \tr \dtm_i
		= \int_{\tM_{-\infty} \cap B_{R+\e}(0)} \chi_{V_i} \tr(\f_i(x)) \sqrt{1+|\tc_{-\infty} f_i|^2} \dtm_{-\infty} ,
	\]
	where the integrals now have a fixed domain, and $\chi_{V_i}$ is the characteristic function 
	The integrand is bounded by 2 and converges pointwise to $\tr$. 
	Therefore we can apply dominated convergence. 
	Taking $i$ large enough, and repeatedly absorbing $O(\e)$-terms, we write
	\begin{align*}
		\int_{\tM(s_i)}\tr \dtm_i 
	&	= \int_{\tM(s_i) \cap B_R(0)} \tr \dtm_i + O(\e) \\
	&= \int_{\tM_{-\infty} \cap B_{R+\e}(0)} \tr \dtm_{-\infty} +O(\e) \\
	& = \int_{\tM_{-\infty}} \tr \dtm_{-\infty} + O(\e) , 
	\end{align*}
	where we used 
	\[
		\int_{\tM_{-\infty} \wo B_{R+\e}(0)} \tr \dtm_{-\infty} 
		\le \int_{\tM_{-\infty} \wo B_R(0)} \tr \dtm_{-\infty} .
	\]

%
%
%
\end{proof}

%
%

The following result is where we really need $N=2$. 
That is, if $\tM_{-\infty}$ and $\tM_{+\infty}$ can be generalized cylinders, our method does not prevent them from being generlized cylinders with different numbers of flat factors. 
\thref{lem:evo} lets us handle the case where either limit is a (compact) sphere, and we are able to rule out planes altogether. 
Restricting our scope to surfaces means the only other possiblity is cylinders with the known factorization $\SS^1 \times \R^1$.

\begin{prop}[$\tM_{-\infty} \cong \tM_{\infty}$]
		\thlabel{prop:limit}
		Let $M$ be a smooth, complete, properly embedded, ancient, type-I, mean-convex, two-dimensional mean curvature flow with uniform polynomial volume growth. 
		Assume $M$ has a singular point at the origin at time $t=0$.

		Then $\tM_{-\infty}$ and $\tM_{\infty}$ are either both spheres or are both cylinders. They have the same radius, and are centered at the origin. 
	\end{prop}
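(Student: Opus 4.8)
The plan is to combine the monotonicity of $\tE$ with an explicit computation of the Gaussian areas of the three candidate limits supplied by \thref{thm:subseq}. First I would compute $\tE=\int e^{-|x|^2/2}\dtm$ on each model surface. On the plane through $0$ one gets $\int_{\R^2}e^{-|x|^2/2}=2\pi$; on the cylinder of radius $1$, where $|x|^2=1+z^2$, one gets $2\pi\,e^{-1/2}\sqrt{2\pi}$; and on the sphere of radius $\sqrt2$, where $|x|^2\equiv 2$, one gets $8\pi\,e^{-1}$. The decisive point is the strict ordering
\[
2\pi \;<\; 8\pi\,e^{-1} \;<\; 2\pi\sqrt{2\pi}\,e^{-1/2},
\]
that is, the plane has the smallest area, the sphere the next, and the cylinder the largest. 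By the preceding lemma $\tE_{\pm\infty}$ equals the Gaussian area of $\tM_{\pm\infty}$, the backward value being one of these three numbers and the forward value one of the latter two (the plane being excluded forward by \thref{thm:subseq}).

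I would then apply monotonicity: \thref{lem:dE} shows $\tE$ is decreasing, so $\tE_{-\infty}\ge\tE_{+\infty}$. Running this inequality through the six possible pairings $(\tM_{-\infty},\tM_{+\infty})$ eliminates backward-plane/forward-sphere, backward-plane/forward-cylinder, and backward-sphere/forward-cylinder, since in each of these the backward area is strictly smaller. What remains are the two matched pairs (both spheres or both cylinders) and the single mismatched pair backward-cylinder/forward-sphere, which monotonicity alone cannot exclude because the cylinder carries the largest area.

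To rule out this last pair I would argue that a spherical forward limit forces $M$ to be compact. If $\tM_{+\infty}$ is the sphere of radius $\sqrt2$, then for large $s$ the surface $\tM(s)$ is $C^2$-close to a closed sphere on each ball; since the origin is the first singular point and the convergence is in the local graph sense of \thref{defn:to}, the component of $M$ producing the singularity is a closed surface shrinking to $0$, whence $M(t)$ is compact for $t$ near $0$ and hence for all $t$. Now \thref{lem:evo} gives that $\tM(s)$ stays in a fixed ball for all $s\le s_0$, so $\tM_{-\infty}$ is bounded, hence compact, contradicting its being a noncompact cylinder. Therefore $\tM_{-\infty}$ and $\tM_{+\infty}$ are of the same type, and the common center $0$ together with the common radius ($1$ for cylinders, $\sqrt2$ for spheres) follows at once from \thref{thm:subseq}.

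The step I expect to be the main obstacle is exactly this compactness argument: passing from ``the forward tangent flow is a sphere'' to ``$M$ is compact.'' A noncompact $M$ could a priori carry pieces far from $0$ that the rescaling around the origin never detects, so the argument must genuinely use that $0$ is the first singular point (so that no earlier singularities occur elsewhere) and that the local-graph convergence captures the whole vanishing component. Once compactness is secured, the appeal to \thref{lem:evo} is routine.
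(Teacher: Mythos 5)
Your proposal is correct and follows essentially the same route as the paper: the same three Gaussian-area computations with the same ordering $\tE_P < \tE_S < \tE_C$, monotonicity of $\tE$ to eliminate the plane and the backward-sphere/forward-cylinder pairing, and \thref{lem:evo} to dispose of the single case monotonicity cannot reach (backward cylinder, forward sphere), via compactness of the flow and the resulting uniform bound backward in rescaled time. The paper merely organizes the sphere cases slightly differently --- invoking the compactness argument whenever either limit is a sphere --- but the ingredients and logic are identical to yours.
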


	\begin{proof}
		Recall from \thref{thm:subseq} we know that $\tM_{-\infty}$ is either a plane, cylinder, or sphere, and $\tM_{+\infty}$ is only a cylinder or sphere. 
		\\
		
		Now we turn our attention to determining possibile shapes for $\tM_{-\infty}$. 
		The strategy is to use the monotonicity of $\tE$ to rule out the plane, then show that $\tM_{-\infty}$ if and only if $\tM_{\infty}$. 
		If $\tE_P$, $\tE_C$, and $\tE_{S}$ are the Gaussian areas for the plane, cylinder of radius 1, and sphere of radius $\sqrt{2}$ respectively, a direct calculation gives
		$\tE_P=2\pi$, $\tE_C=2\pi \sqrt{\frac{2\pi}{e}}$, and $\tE_S=2\pi \frac{4}{e}$. 
		That is
		\[
			\tE_P < \tE_S < \tE_C .
		\]
		
		First suppose $\tM_{-\infty}$ is a plane. 
		We already know $\tM_{\infty}$ is a cylinder of radius 1 or a sphere of radius $\sqrt{2}$. 
		However that would mean $\tE$ increased, which is a contradiction. 
		\\

		If either $\tM_{-\infty}$ or $\tM_{\infty}$ is a sphere, then there is $s \in \R$ so that $\tM(s)$ is compact. 
		Thus by \thref{lem:evo}, $\tM$ is a compact flow. 
		Therefore both $\tM_{-\infty}$ and $\tM_{\infty}$ must be the same sphere. 
		\\

		Now we have that $\tM_{+\infty}$ is a sphere if and only if $\tM_{-\infty}$ is a sphere. 
		Then, by process of elimination, $\tM_{+\infty}$ is a cylinder if and only if $\tM_{-\infty}$ is a cylinder. 
		Thus $\tM_{+\infty}$ must be isometric to $\tM_{-\infty}$, since \thref{thm:subseq} ensures they have the same radius. 
		Due to the equations $\tF_{\pm \infty} \cdot \tn_{\pm \infty} = \tH_{\pm \infty}$, the sphere or cylinder must be centered around the origin.
	\end{proof}

%
%
%
%
%

Finally, since $\tM_{+\infty}$ and $\tM_{-\infty}$ are isometric and both centered at 0, they have the same Gaussian area.
However, the axis of $\tM_{-\infty}$ could depend on the subsequence. 
We address this issue in the following propostion. 
\begin{prop}
	\thlabel{prop:unique}
	Let $M$ be as in \thref{prop:limit}. 
	Then $\tM_{-\infty} \equiv \tM(s) \equiv \tM_{+\infty}$. 
\end{prop}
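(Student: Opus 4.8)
The plan is to run the Giga--Kohn argument: show that the monotone Gaussian area $\tE$ is actually constant along the whole rescaled flow, and then read off from \thref{lem:dE} that every time-slice is stationary for the rescaled equation. The first task is to compare the two endpoint values. By \thref{prop:limit}, $\tM_{-\infty}$ and $\tM_{+\infty}$ are isometric (both spheres of radius $\sqrt 2$ or both cylinders of radius $1$) and are centered at the origin. Since $\tr(x)=e^{-|x|^2/2}$ depends only on $|x|$, it is invariant under any rotation carrying one of these centered limits onto the other, so the two limits have the same Gaussian area. By the preceding lemma the limits $\tE_{\pm\infty}$ exist and equal the Gaussian areas of $\tM_{\pm\infty}$; hence $\tE_{-\infty}=\tE_{+\infty}$.

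Next I would invoke monotonicity. By \thref{lem:dE} the quantity $\partial_s\tE$ has a fixed sign, so $\tE$ is monotone on $\R$; being squeezed between its two coinciding endpoint values $\tE_{-\infty}=\tE_{+\infty}$, it must be constant. Therefore $\partial_s\tE(s)=0$ for every $s$, and since the integrand of \thref{lem:dE} is nonnegative while $\tr>0$ pointwise, I conclude
\[
	\tF^{\perp}-\tH\tn = 0 \quad \text{everywhere on } \tM(s), \text{ for all } s .
\]

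Finally, from \eqref{eqn:r} the normal speed of the rescaled flow is $(\partial_s\tF)\cdot\tn = \tF\cdot\tn - \tH = \tF^{\perp}\cdot\tn - \tH$, which vanishes identically by the displayed identity. An embedded hypersurface evolves as a set only through its normal velocity (tangential motion is mere reparametrization), so $\tM(s)$ is independent of $s$; that is, $\tM(s)$ is one fixed surface for all $s$. This simultaneously resolves the subsequence issue raised before the statement: because the flow does not move, every subsequential limit as $s\to\pm\infty$ must equal this single fixed surface, so the axis of a cylindrical limit cannot depend on the chosen subsequence. Hence $\tM_{-\infty}\equiv\tM(s)\equiv\tM_{+\infty}$.

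I expect the main obstacle to be this last step: upgrading the pointwise identity $\tF^{\perp}=\tH\tn$ from ``each slice is a self-shrinker'' to genuine constancy of the surface, and doing so in a way that cleanly eliminates the potential subsequence-dependence of the limiting axis. The clean route is the normal-velocity argument above, which pins $\tM(s)$ down for all $s$ at once; one should also verify that this fixed surface is compatible with the graph-convergence bookkeeping of \thref{thm:subseq}, so that no stray subsequence can converge to a differently oriented cylinder.
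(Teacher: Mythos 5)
Your proposal is correct and follows essentially the same route as the paper: equate $\tE_{-\infty}$ and $\tE_{+\infty}$ via the isometry of the limits from \thref{prop:limit}, use the monotonicity formula of \thref{lem:dE} to force $\tF^{\perp}-\tH\tn\equiv 0$ for all $s$, and conclude the rescaled flow is stationary up to tangential reparametrization. Your added remarks (rotation invariance of $\tr$ to justify equal Gaussian areas, and the normal-velocity argument pinning down the set-theoretic constancy and hence the uniqueness of the limiting axis) are just slightly more explicit versions of steps the paper leaves implicit.
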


\begin{proof}
	From \thref{prop:limit}, $\tM_{-\infty}$ and $\tM_{+\infty}$ are isometric. 
	Then we can write
	\[
		0 
		= \tE_{-\infty} - \tE_{\infty}
		= \int_{\infty}^{-\infty} \int_{\tM(s)} \left| \tF^{\perp}-\tH \tn \right|^2 \tr \dtm \: ds
	\]
	Thus we conclude that $\left( \bd_s \tF \right)^{\perp}  =\tF^{\perp}-\tH \tn= 0$ for all time. 
	This means, up to tangential diffeomorphism, that $\tM(s)$ is stationary. 
	Thus $\tM(s)$ is a fixed sphere or cylinder.

\end{proof}

\begin{proof}[Proof of Main Theorem]
	Without loss of generality, assume $M$ has a singularity at (0,0). 
	By \thref{prop:limit} and \thref{prop:unique}, $\tM(s)$ is either a stationary sphere or cylinder centered at the origin. 
	This corresponds to a homothetically shrinking $M(t)$ that is a sphere or cylinder. 
\end{proof}

	\bibliographystyle{plain}
\bibliography{bib}
\end{document}